\numberwithin{equation}{section}
\newtheorem{theorem}{Theorem}[section]
\newtheorem{lemma}{Lemma}
\newtheorem{corollary}[lemma]{Corollary}
\numberwithin{lemma}{section}
\theoremstyle{definition}
\theoremstyle{remark}
\newtheorem{remark}{Remark}
\begin{document}
\date{}
\title{Action minimizing orbits in the 2-center problems with simple  choreography constraint}

 \author{Furong Zhao}
\address{Department of Mathematics and  Physics, Mianyang Normal University, Mianyang, Sichuan, 621000, P.R.China}
\curraddr{}
\email{zhaofurong2006@163.com}
\author{Zhiqiang Wang}
\address{College of Mathematics and Statistics,  Chongqing University, Chongqing 401331, P.R.China}
\curraddr{}
\email{wzq2015@cqu.edu.cn}
\subjclass[2010]{34C25, 70F10,70G75}
\maketitle
\begin{abstract}
The aim of this paper is to study the motion of   $2+n$-body problem where  two  equal
masses are assumed to be fixed. We assume that the value of each fixed  mass is equal to $M>0$ and  the remaining  $n$ moving particles  have equal masses $m>0$.
According to Newton's second law and the universal gravitation law,  the $n$  particles move under the interaction of each other and the affection of the two fixed particles.
Also, this motion has a natural variational structure.  
Under the simple  choreography constraint, we show that the Lagrangian action functional attains its absolute minimum on  a uniform circular motion.

%In 2000, by using the variational method, Chenciner and Montgomery  proved the existence of the remarkable figure-8 type periodic solution for planar Newtonian 3-body problems with equal masses.  Since then, a number of periodic solutions have been found as minimizers in some special subspace. This work was mostly motivated by a work of Barutello-Terracini (Action minimizing orbits in the n-body problem with simple choreography constraint, Nonlinearity 17 (2004) 2015-2039) and Long-Zhang(Geometric Characterizations for Variational Minimization Solutions of the 3-Body Problem, Acta Math.Sinica.16 (4) (2000) 579-592.). In our work, we consider the motion of Newtonian $2+n$-body problem where  two  equal masses are assumed to be fixed and the other n masses  moves as a simple  choreography. We try to use the variational minimizing method to study the fixed center n-body problem and find some new periodic solutions.

\noindent{\emph{Keywords}}:  Newtonian 2+n-body problems; fixed center; variational method; simple  choreography constraint; uniform circular motion.\\

\end{abstract}

\section{Introduction and Main Results }

The N-center problem describes the motion of a test particle in the field of N space fixed
Newtonian centers of attraction. 
The 1-center problem is actually a  Newtonian 2-body problem which has already been solved by Newton.
The 2-center problem was first investigated by Euler in 1760 and  he proved the integrability of such system. 
While for $N\geq 3$, based on the works of Bolotin, Negrini,  Knauf and Klein \cite{6,7,8,19,20}, we know that
the N-center problem is not completely integrable. 
Castelli \cite{9,10}  used the variational approach to study the N-center problem .
In this paper, we also use the variational minimization method to study  a $2+n$-body problem where   two particles with equal masses of value $M>0$ are fixed at positions $C_1=(1,0,0)$ and $C_2=(-1,0,0)$, while the remaining $n(\geq2)$ particles, with their masses equal to $m>0$,  move under the interaction of each other and the affection of  the two fixed particles. 
 %According to Newton's second law and the law of universal gravitation,
 Mathematically, we suppose the force of the fixed center and the moving particles generated by a potential of $\frac{1}{r^{\beta}},\frac{1}{r^{\alpha}},\alpha,\beta>0$, respectively.
These equations of the motion for the $n$ moving  particles can be represented as
\begin{align}\label{1.1}
&m\ddot{q}_{i}(t)=\nabla_{q_{i}}V(q_{1},\ldots,q_{n}),~q_{i}\in \mathbb{R}^3,~i=1,\ldots,n,
\end{align}
where the potential function is given by
\begin{align*}
V(q_{1},\ldots,q_{n})=\sum_{1\leq i<j\leq n}^{n}\frac{m^{2}}{|q_i-q_j|^{\alpha}}+\sum_{i=1}^{n}(\frac{mM}{|q_i-C_1|^{\beta}}+\frac{mM}{|q_i-C_2|^{\beta}}),
\end{align*}
and $q(t)=(q_{1}(t),\ldots,q_{n}(t))$ are  orbits of the $n$ moving  particles.
We will analyse the problem from a variational point of view to find the $2\pi$-periodic solutions of the dynamical system (\ref{1.1}). Looking for periodic solutions of (\ref{1.1}) is equivalent to seeking the critical points of  the associated action functional $\mathcal{A}:\mathcal{H}\rightarrow \mathbb{R}\bigcup \{+\infty\}$
\begin{align*}
&\mathcal{A}(q)=\int_0^{2\pi}[\frac{m}{2}\sum_{i=1}^{n}|\dot{q}_{i}(t)|^2+V(q_{1},\ldots,q_{n})]dt
\end{align*}
on the set
\begin{align*}
\mathcal{H}=\{&q(t)\in H_{2\pi}^{1}({\mathbb{R}}, \mathbb{R}^{3n})\mid q_i(t)\neq q_j(t),~\forall i\neq j,~\forall t\in\mathbb{R}; \\
                                                                      & q_k(t)\neq C_1,~q_k(t)\neq C_2,k=1,2,...,n, ~\forall t\in\mathbb{R}\}.
\end{align*}
In order to find  collisionless periodic solution, many methods have been exploited in the last decades(see \cite{1,2,3,4,5,9,10,11,12,13,14,15,16,17,21,22,24,25,27,28,29}).
As well as the usual Newtonian N-body problem, the main difficulties are that in principle critical points  there might be trajectories with collisions and the action functional is not coercive. 
%To get the coercivity\cite{23,26}, we need  
%\begin{lemma}\label{palais}(Palais principle of symmetric criticality \cite{23})
%Let $G$ be an orthogonal group on a Hilbert space ${H}$. Define the fixed
%point space: ${H}_G = \{x \in \Lambda | g\cdot x = x, \forall g\in G\}$; if $f \in C^1({H},R)$ and satisfies $f (g\cdot x) = f (x)$
%for any $g \in G$ and $x \in{H}$, then the critical point of $f$ restricted on ${H}_G$ is also a critical point of
%$f$ on ${H}$.
%\end{lemma}
%To be precise, we can consider the functional $\mathcal{A}$ on some special symmetric subspace $\mathcal{H}_G\subset\mathcal{H}$ such that $\mathcal{A}|_{\mathcal{H}_G}$ is coercive. Then Lemma \ref{palais} provides that  the critical point on $\mathcal{H}_G$ is also a critical point on the whole space $\mathcal{H}$ if $\mathcal{A}(g\cdot q)= \mathcal{A}(q)$.
In this paper, we consider that the motion of the $n$ moving particles is a simple choreography in which the bodies lie on the same curve and exchange their mutual positions after a fixed time; i.e.,
$$q_i(t)=q_{i-1}(t+\frac{2\pi}{n}), \ i=2,3,...,n,\  \forall t\in\mathbb{R}.$$
In \cite{4}, Barutello and Terracini  study  the n-body problem with the only constraint to be a simple choreography, but without  fixed centers. They prove that the absolute minimum of the corresponding functional 􏰄 is attained on a relative equilibrium motion  associated with the regular polygon $\tilde{q}(t)$, which is a solution that  the $n$ particles lie at the vertices of a regular $n$-gon and do a uniform circular motion around the center of the regular $n$-gon. Can we get the same result if we consider the $n$ choreographic particles move in our two center problem? We have our main theorem.
\begin{theorem}\label{maintheorem}
For given $\alpha,\beta,m,M>0$, the absolute minimum of $\mathcal{A}$ on $\Lambda$ 􏰄 is attained on a
relative equilibrium motion $\tilde{q}(t)$, %associated with the regular $n$-gon, 
where 
$$\Lambda=\{q(t)\in\mathcal{H}\mid \int_0^{2\pi} q_i(t)dt=0,q_i(t)=q_1(t+(i-1)\frac{2\pi}{n})~i=1,2,\ldots,n\}.$$
 Moreover we have,  at every instant,
 $\tilde{q}_1(t)$, $\tilde{q}_2(t)$, $\ldots$, $\tilde{q}_n(t)$ lie at the vertices of a regular $n$-gon centered at {origin} in $yoz$-plane and 
 the circumradius of the regular $n$-gon  increases as $m$ increases.
\end{theorem}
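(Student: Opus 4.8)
\emph{Proof strategy.} Because $\Lambda$ is by definition collision free and the candidate relative equilibrium will be seen to lie in $\Lambda$, it suffices to produce a sharp pointwise lower bound $\mathcal A(q)\ge\mathcal A(\tilde q)$ for every $q\in\Lambda$; no compactness or collision‑exclusion argument is needed. The idea is to bound $\mathcal A$ below by an explicit one–variable function $\Phi(r)$ whose minimum is realized exactly by the rotating regular $n$–gon. For $q\in\Lambda$ put $\mathcal I:=\frac1{2\pi}\int_0^{2\pi}|q_1(t)|^2\,dt$; by the choreography constraint $\frac1{2\pi}\int_0^{2\pi}|q_i|^2\,dt=\mathcal I$ for all $i$. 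Since $|C_1|=|C_2|=1$ and $\int_0^{2\pi}q_i\,dt=0$, expanding the square gives $\frac1{2\pi}\int_0^{2\pi}|q_i(t)-C_\ell|^2\,dt=1+\mathcal I$ for $\ell=1,2$. Jensen's inequality for the convex function $s\mapsto s^{-\beta/2}$ then yields $\int_0^{2\pi}|q_i(t)-C_\ell|^{-\beta}\,dt\ge 2\pi(1+\mathcal I)^{-\beta/2}$, with equality if and only if $|q_i(\cdot)-C_\ell|$ is constant, i.e. $q_i$ takes values in the plane $\{x=0\}$ (the $yoz$–plane). Summing over $i$ and $\ell$, the fixed–center part of $\mathcal A(q)$ is at least $4\pi mMn(1+\mathcal I)^{-\beta/2}$.

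\emph{The kinetic and mutual part, via \cite{4}.} Let $K(q)=\frac m2\int_0^{2\pi}\sum_i|\dot q_i|^2$ and $U(q)=\int_0^{2\pi}\sum_{i<j}m^2|q_i-q_j|^{-\alpha}$; this is precisely the translation–invariant functional of Barutello–Terracini, whose infimum over simple choreographies they prove to be attained on the rotating regular $n$–gon. Since $K(\lambda q)=\lambda^2K(q)$ and $U(\lambda q)=\lambda^{-\alpha}U(q)$, optimizing over dilations turns their statement into the scale–invariant inequality $K(q)^{\alpha/2}U(q)\ge K_0^{\alpha/2}U_0$, where $K_0=\pi mn$ and $U_0=2\pi m^2B_n$ with $B_n:=\sum_{1\le i<j\le n}\bigl(2\sin\tfrac{(j-i)\pi}{n}\bigr)^{-\alpha}$ are the kinetic energy and potential of the unit–circumradius polygon; moreover, by the same arguments, equality forces $q$ to be a rotating regular polygon. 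Hence $U(q)\ge K_0^{\alpha/2}U_0K(q)^{-\alpha/2}$, so $K(q)+U(q)\ge G(K(q))$ with $G(K):=K+K_0^{\alpha/2}U_0K^{-\alpha/2}$ convex and minimized at $K=K_0r_0^2$, where $r_0:=\bigl(\tfrac{\alpha U_0}{2K_0}\bigr)^{1/(\alpha+2)}$ is the circumradius of the $(K+U)$–minimizing polygon. On the other hand Wirtinger's inequality (using $\int_0^{2\pi}q_1=0$) gives $K(q)=\frac{mn}2\int|\dot q_1|^2\ge\frac{mn}2\int|q_1|^2=K_0\mathcal I$. Therefore, if $\mathcal I\ge r_0^2$ then $K(q)\ge K_0\mathcal I$ lies in the increasing branch of $G$, so $K(q)+U(q)\ge G(K_0\mathcal I)=\pi mn\,\mathcal I+2\pi m^2B_n\,\mathcal I^{-\alpha/2}=:\Phi_0(\sqrt{\mathcal I})$, while if $\mathcal I< r_0^2$ we still have $K(q)+U(q)\ge\min G=\Phi_0(r_0)$.

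\emph{Conclusion and identification.} Set
\[\Phi(r)=\pi mn\,r^2+2\pi m^2B_n\,r^{-\alpha}+4\pi mMn(1+r^2)^{-\beta/2}=\Phi_0(r)+4\pi mMn(1+r^2)^{-\beta/2}.\]
Viewed as a function of $s=r^2$, each summand is convex and $\Phi\to+\infty$ as $s\to0^+$ and as $s\to\infty$, so $\Phi$ has a unique minimizer $r^*=r^*(m)>0$. Combining the two steps: for $q\in\Lambda$ with $\mathcal I\ge r_0^2$ one gets $\mathcal A(q)\ge\Phi_0(\sqrt{\mathcal I})+4\pi mMn(1+\mathcal I)^{-\beta/2}=\Phi(\sqrt{\mathcal I})\ge\Phi(r^*)$; for $\mathcal I<r_0^2$, since $r\mapsto(1+r^2)^{-\beta/2}$ is strictly decreasing, $\mathcal A(q)>\Phi_0(r_0)+4\pi mMn(1+r_0^2)^{-\beta/2}=\Phi(r_0)\ge\Phi(r^*)$. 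The relative equilibrium $\tilde q$ with $\tilde q_i(t)=r^*\bigl(0,\cos(t+(i-1)\tfrac{2\pi}{n}),\sin(t+(i-1)\tfrac{2\pi}{n})\bigr)$ belongs to $\Lambda$ (its mutual distances $2r^*\sin\tfrac{(j-i)\pi}{n}$ and its distances $\sqrt{1+r^{*2}}$ to each $C_\ell$ are positive, and $\int_0^{2\pi}\tilde q_i=0$) and satisfies $\mathcal A(\tilde q)=\Phi(r^*)$; hence it realizes $\min_\Lambda\mathcal A$. Tracing the equality cases shows a minimizer must have $\mathcal I=r^{*2}$, must satisfy equality in Jensen (orbit in the $yoz$–plane), equality in Wirtinger ($q_1$ a pure first harmonic), and equality in the Barutello–Terracini inequality (bodies in a rotating regular $n$–gon); thus any minimizer coincides with $\tilde q$ up to a rotation of the $yoz$–plane and a time translation, and at every instant $\tilde q_1(t),\dots,\tilde q_n(t)$ are the vertices of a regular $n$–gon centered at the origin in the $yoz$–plane.

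\emph{Monotonicity of the circumradius, and the main obstacle.} Writing $\Phi$ as a function of $s=r^2$, $\Phi=m\,g(s)+m^2h(s)$ with $g(s)=\pi ns+4\pi nM(1+s)^{-\beta/2}$ and $h(s)=2\pi B_n s^{-\alpha/2}$; minimizing $\Phi$ amounts to minimizing $g(s)+m\,h(s)$, and $h$ is strictly decreasing. If $s^*(m_1)=s^*(m_2)=:s$ for $m_1<m_2$, the first–order conditions $g'(s)+m_jh'(s)=0$ $(j=1,2)$ force $h'(s)=0$, impossible; equivalently, adding the two minimality inequalities gives $(m_2-m_1)\bigl(h(s^*(m_2))-h(s^*(m_1))\bigr)\le0$, so $h(s^*(m_2))\le h(s^*(m_1))$ and hence $s^*(m_2)\ge s^*(m_1)$. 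Therefore the circumradius $r^*(m)=\sqrt{s^*(m)}$ is (strictly) increasing in $m$. The one essential external input is the Barutello–Terracini minimization together with its rigidity statement \cite{4}; within the present argument the point requiring the most care is the interplay in the middle step — verifying that the optimal size always falls in the range $\mathcal I\ge r_0^2$ (which is exactly where \cite{4} can be used in its sharp, size–localized form), a fact that hinges on the fixed–center potential being strictly decreasing in $\mathcal I$.
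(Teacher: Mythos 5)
Your proof is correct, but it follows a genuinely different route from the paper. The paper splits the kinetic term with a parameter $\lambda\in[-1,1]$, pairing $\frac{1+\lambda}{4}|\dot x|^2$ with the fixed--center potential and $\frac{1-\lambda}{4}|\dot x|^2$ with the mutual potential, estimates each piece separately (Poincar\'e--Wirtinger plus Jensen for the first; a self-contained generalized Wirtinger inequality, Corollary \ref{cor3.2}, plus Jensen for the second), obtains two circular minimizers of radii $R_1(\lambda)$ and $R_2(\lambda)$, and then tunes $\lambda$ so that $R_1=R_2$; monotonicity in $m$ comes from implicit differentiation of $F(\lambda,m)=R_2-R_1$. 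You instead keep the kinetic energy whole, invoke the Barutello--Terracini theorem \cite{4} together with its rigidity as a black box (converted by scaling into $K^{\alpha/2}U\ge K_0^{\alpha/2}U_0$), and couple the two potential contributions through the single scalar $\mathcal I=\frac{1}{2\pi}\int|q_1|^2$, reducing everything to the one-variable function $\Phi$; your two-point comparison argument for the monotonicity of $r^*(m)$ is cleaner than the paper's implicit-function computation. The trade-offs: your argument is more modular but leans on the full strength of \cite{4} (including the characterization of \emph{all} its minimizers), whereas the paper reproves the needed choreographic Wirtinger inequality and only uses its own equality cases; your route avoids the $\lambda$-tuning entirely. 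The one point you flag as delicate --- whether the optimum lands in the regime $\mathcal I\ge r_0^2$ where the size-localized form of \cite{4} applies --- is in fact automatic: since $r\mapsto(1+r^2)^{-\beta/2}$ is strictly decreasing, $\Phi'(r_0)<0$, so the unique minimizer satisfies $r^*>r_0$; alternatively, your Case 2 yields a strict inequality for every $q$ with $\mathcal I<r_0^2$, while $\tilde q$ attains $\Phi(r^*)$, which forces $r^*\ge r_0$. As a consistency check, your stationarity equation $1=\frac{\alpha mB_n}{n}s^{-(\alpha+2)/2}+2\beta M(1+s)^{-(\beta+2)/2}$ (with $s=r^{*2}$) is exactly what one obtains by adding the paper's two relations $\frac{1+\lambda}{4}=\beta M(1+R^2)^{-(\beta+2)/2}$ and $\frac{1-\lambda}{4}=\frac{\alpha mB_n}{2n}R^{-(\alpha+2)}$, so the two methods determine the same circumradius.
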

In the proof, we will see  that the following two conditions are basically needed:\\
(H1) $\forall q(t)\in \Lambda$, $\int_0^{2\pi} q_i(t)dt=0,~i=1,2,\ldots,n$;\\
(H2) $\forall q(t)\in \Lambda$, $ q_i(t)=q_1(t+(i-1)\frac{2\pi}{n})$, $i=2,3,...,n$.\\
Then applying some inequalities which  will be introduced in Section \ref{section2}, we give the sharp estimates in Section \ref{section3}.
In \cite{4},  Condition (H1) is also used but not emphasized because the functional in the usual $n$ body problem (which is equivalent to our functional $\mathcal{A}$ in the case $M=0$) is invariant under translations.
It is a classic result that the relative equilibrium motion associated with the regular polygon is a trivial solution in the usual $n$-body problem.
However, is  the minimum right the solution of dynamical system (\ref{1.1})?
Now we give a positive answer through the following lemma.
%To get the coercivity\cite{23,26}, we need  
\begin{lemma}\label{palais}(Palais principle of symmetric criticality \cite{23})
Let $G$ be an orthogonal group on a Hilbert space ${H}$. Define the fixed
point space: ${H}_G = \{x \in H | g\cdot x = x, \forall g\in G\}$; if $f \in C^1({H},R)$ and satisfies $f (g\cdot x) = f (x)$
for any $g \in G$ and $x \in{H}$, then the critical point of $f$ restricted on ${H}_G$ is also a critical point of
$f$ on ${H}$.
\end{lemma}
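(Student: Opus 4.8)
The plan is to prove the statement---which is Palais's classical principle of symmetric criticality---by establishing equivariance of the gradient of $f$ and then playing it against the fixed-point space.

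First I would record the two structural facts that make everything work. Since each $g\in G$ is a bounded linear operator, $H_G=\bigcap_{g\in G}\ker(g-\operatorname{id}_H)$ is a closed linear subspace of $H$; in particular $f|_{H_G}$ is a $C^1$ function on the Hilbert space $H_G$, and $x_0\in H_G$ is a critical point of $f|_{H_G}$ precisely when $\langle\nabla f(x_0),v\rangle=0$ for every $v\in H_G$, i.e. when the orthogonal projection of $\nabla f(x_0)$ onto $H_G$ vanishes, where $\nabla f$ denotes the Riesz representative of $Df$. Second, the invariance $f(g\cdot x)=f(x)$ forces $\nabla f$ to be $G$-equivariant: differentiating in $x$ along $v\in H$ gives $\langle\nabla f(g\cdot x),g\cdot v\rangle=\langle\nabla f(x),v\rangle$, and since $g$ is orthogonal the right-hand side equals $\langle g\cdot\nabla f(x),g\cdot v\rangle$; as $v\mapsto g\cdot v$ is surjective this yields $\nabla f(g\cdot x)=g\cdot\nabla f(x)$ for all $g\in G$ and $x\in H$.

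Then I would conclude as follows. Let $x_0\in H_G$ be a critical point of $f|_{H_G}$. Equivariance at $x_0$ together with $g\cdot x_0=x_0$ gives $g\cdot\nabla f(x_0)=\nabla f(g\cdot x_0)=\nabla f(x_0)$ for every $g\in G$, so $\nabla f(x_0)\in H_G$. Since $x_0$ is critical for $f|_{H_G}$ we may then take the admissible test vector $v=\nabla f(x_0)\in H_G$ in the relation $\langle\nabla f(x_0),v\rangle=0$, obtaining $\|\nabla f(x_0)\|^2=0$; hence $\nabla f(x_0)=0$ and $x_0$ is a critical point of $f$ on all of $H$.

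The argument is short, and the only place needing care is the bookkeeping in the second paragraph: one must check that $H_G$ really is a linear subspace (so that ``critical point of $f|_{H_G}$'' has the stated meaning) and that the gradient of the restriction $f|_{H_G}$ at $x_0$ is the orthogonal projection of $\nabla f(x_0)$ onto $H_G$, so that vanishing of that projection---rather than of $\nabla f(x_0)$ itself---is exactly what the hypothesis supplies; the equivariance step then upgrades it to $\nabla f(x_0)=0$. I would also remark that, unlike some treatments of symmetric criticality, no averaging over $G$ and no compactness of $G$ are required: the proof uses only that each $g$ is orthogonal, so that $g^{*}=g^{-1}$. I expect no genuine obstacle beyond making the equivariance identity precise.
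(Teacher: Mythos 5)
Your proof is correct. Note that the paper does not actually prove this lemma---it is quoted as a known result with a citation to Palais's original article---so there is no in-paper argument to compare against; what you have written is the standard, self-contained proof for exactly the setting stated (a group acting by orthogonal linear transformations on a Hilbert space): the invariance $f(g\cdot x)=f(x)$ plus orthogonality of $g$ gives equivariance $\nabla f(g\cdot x)=g\cdot\nabla f(x)$, at a fixed point this forces $\nabla f(x_0)\in H_G$, and testing criticality of $f|_{H_G}$ against $v=\nabla f(x_0)$ yields $\nabla f(x_0)=0$. Your closing remark is also accurate and worth keeping: no averaging over $G$ or compactness is needed here, precisely because each $g$ is an isometry (this is the hypothesis under which the principle holds; for general non-isometric actions it can fail). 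The only bookkeeping points---closedness and linearity of $H_G$, and the identification of the gradient of the restriction with the orthogonal projection of $\nabla f(x_0)$ onto $H_G$---are the ones you flagged, and you handled them correctly.
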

To be precise, we can consider the functional $\mathcal{A}$ on some special symmetric subspace $\mathcal{H}_G\subset\mathcal{H}$ such that $\mathcal{A}|_{\mathcal{H}_G}$ is coercive. Then Lemma \ref{palais} provides that  the critical point on $\mathcal{H}_G$ is also a critical point on the whole space $\mathcal{H}$ if  for all $g\in{G}$ and $q\in\mathcal{H}$, $\mathcal{A}(g\cdot q)= \mathcal{A}(q)$.
\begin{corollary}
Let $\mathcal{G}=<g_{1},g_{2}>$ be a finite group  with the following representations $ \rho:~\mathcal{G}\rightarrow O(3)$, $ \tau:~\mathcal{G}\rightarrow O(2)$ and $ \sigma:~\mathcal{G}\rightarrow \Sigma_n$ 
such that
\begin{equation}\label{groupaction}
g_{j}\cdot q(t)=(\rho(g_{j}) q_{\sigma(g_{j}^{-1})1}(\tau(g_{j}^{-1})t),~\cdots,~ \rho(g_{j}) q_{\sigma(g_{j}^{-1})n}(\tau(g_{j}^{-1})t)), j=1,2,
\end{equation}
where 
\begin{align}
\label{ga1}\rho(g_{1})=id, \sigma(g_{1}^{-1})i=i-1, \tau(g_{1}^{-1})t=t+\frac{2\pi}{n}, \\
\label{ga2}\rho(g_{2})=-id, \sigma(g_{2}^{-1})i=i, \tau(g_{2}^{-1})t=t+\pi.
\end{align}
Then the minimizer of $\mathcal{A}|_{\mathcal{H}_{\mathcal{G}}}$ is  $\tilde{q}(t)$ and thus a uniform circular solution of (\ref{1.1}).
\end{corollary}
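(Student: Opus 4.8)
The plan is to deduce this Corollary from Theorem~\ref{maintheorem} together with the Palais principle, Lemma~\ref{palais}. I would establish three facts: (a) $\mathcal{A}$ is invariant under the $\mathcal{G}$-action (\ref{groupaction}); (b) $\mathcal{G}$ is finite and acts orthogonally on $H^1_{2\pi}(\mathbb{R},\mathbb{R}^{3n})$; and (c) the fixed-point subspace $\mathcal{H}_{\mathcal{G}}$ satisfies $\mathcal{H}_{\mathcal{G}}\subseteq\Lambda$ and contains the relative-equilibrium minimizer $\tilde q$ produced by Theorem~\ref{maintheorem}. Granting these, $\tilde q$ minimizes $\mathcal{A}$ over $\Lambda$ and hence over the smaller set $\mathcal{H}_{\mathcal{G}}$; since $\mathcal{H}_{\mathcal{G}}$ is a closed linear subspace of $H^1_{2\pi}$ and $\tilde q$ is a collisionless, hence interior, point of it, $\tilde q$ is a critical point of $\mathcal{A}|_{\mathcal{H}_{\mathcal{G}}}$; and Lemma~\ref{palais} then upgrades it to a critical point of $\mathcal{A}$ on $\mathcal{H}$, i.e.\ a $2\pi$-periodic solution of (\ref{1.1}). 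Since $\tilde q$ is by its very description a rigid rotation of a regular $n$-gon, this solution is a uniform circular motion, which is the assertion.

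To check (a) it suffices to verify invariance under the two generators. For $g_1$, the map $q\mapsto g_1\cdot q$ merely permutes the particle labels cyclically and translates time by $2\pi/n$; each symmetric sum in $V$ and the kinetic integral are unchanged by the permutation, and integrating over a full period absorbs the time shift. For $g_2$, the map sends $q_i(t)$ to $-q_i(t+\pi)$: the kinetic term is unchanged because $|\tfrac{d}{dt}(-q_i(t+\pi))|=|\dot q_i(t+\pi)|$; the mutual-interaction terms are unchanged because $|(-q_i)-(-q_j)|=|q_i-q_j|$; and, crucially, since $C_2=-C_1$ we have $|(-q_i)-C_1|=|q_i-C_2|$ and $|(-q_i)-C_2|=|q_i-C_1|$, so the two-center part of $V$ is only reindexed. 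For (b): $g_1$ has order $n$ and $g_2$ has order $2$ (its square translates time by $2\pi$ and squares $-\mathrm{id}$ to the identity), and $g_1$, $g_2$ commute because $\rho,\sigma,\tau$ are homomorphisms into commuting data; hence $\mathcal{G}\cong\mathbb{Z}_n\times\mathbb{Z}_2$ is finite, and each generator acts on $H^1_{2\pi}$ through an orthogonal transformation of $\mathbb{R}^3$, a permutation of the coordinate blocks, and a time translation, all of which are isometries for the $H^1$ inner product.

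For (c), unwinding $g_1\cdot q=q$ yields $q_i(t)=q_{i-1}(t+2\pi/n)$, which iterates to $q_i(t)=q_1(t+(i-1)2\pi/n)$, exactly condition (H2); unwinding $g_2\cdot q=q$ yields $q_i(t)=-q_i(t+\pi)$, and integrating over $[0,2\pi]$ with periodicity forces $\int_0^{2\pi}q_i(t)\,dt=0$, exactly condition (H1). Therefore $\mathcal{H}_{\mathcal{G}}\subseteq\Lambda$. To see that $\tilde q\in\mathcal{H}_{\mathcal{G}}$: since $\tilde q\in\Lambda$, the choreography relation already gives $g_1\cdot\tilde q=\tilde q$; and by Theorem~\ref{maintheorem}, $\tilde q$ is a rigid rotation about the origin in the $yoz$-plane of a regular $n$-gon, so $\tilde q_1(t)=R_{\omega t}v_0$ with $R_\theta$ the rotation by $\theta$ in that plane, $v_0\perp C_1C_2$, and $\omega\in\mathbb{Z}$ from $2\pi$-periodicity. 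Being action-minimizing, $\tilde q$ rotates with the minimal admissible speed $\omega=\pm1$ (a rigid rotation with $|\omega|\geq2$ has strictly larger action after optimizing the circumradius), whence $\tilde q_1(t+\pi)=R_{\omega(t+\pi)}v_0=R_{\pm\pi}\tilde q_1(t)=-\tilde q_1(t)$; the choreography relation then gives $\tilde q_i(t+\pi)=-\tilde q_i(t)$ for all $i$, i.e.\ $g_2\cdot\tilde q=\tilde q$. This completes the argument.

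The main obstacle is really item (c), specifically verifying $\tilde q\in\mathcal{H}_{\mathcal{G}}$: this requires more than the bare statement of Theorem~\ref{maintheorem}, namely that the minimizing relative equilibrium rotates exactly once per period and lies in a plane through the origin perpendicular to the axis $C_1C_2$ — facts to be extracted from the proof of that theorem (or re-derived by comparing the actions of rigid rotations with varying angular speed and radius). A minor, purely technical point is that Lemma~\ref{palais} is phrased for a $C^1$ functional on the entire Hilbert space, whereas $\mathcal{A}$ is smooth only on the open, $\mathcal{G}$-invariant, collisionless set $\mathcal{H}$; this is resolved by noting that $\tilde q$ is an interior point of $\mathcal{H}$, so the symmetric-criticality argument applies locally there without change.
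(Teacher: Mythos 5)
Your proposal is correct and follows essentially the same route as the paper: show $\mathcal{H}_{\mathcal{G}}\subset\Lambda$ via (H1)--(H2), note $\tilde q\in\mathcal{H}_{\mathcal{G}}$ so Theorem \ref{maintheorem} gives minimality on $\mathcal{H}_{\mathcal{G}}$, verify $\mathcal{G}$-invariance of $\mathcal{A}$ (using that the two centers are antipodal with equal mass), and apply Lemma \ref{palais}. You simply supply details the paper leaves implicit, e.g.\ the explicit invariance check under $g_2$ and the antiperiodicity of $\tilde q$ coming from the minimizer having angular frequency one, which indeed follows from the equality case in the proof of Theorem \ref{maintheorem}.
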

\begin{proof}
%We first  prove that the group action $\mathcal{G}$ satisfies the assumptions H1,H2,H3.
First we will see the fixed point space $\mathcal{H}_{\mathcal{G}}\subset \Lambda$.
Precisely, the action of $g_{1}$ (\ref{ga1}) is the simple choreography constraint (H2), 
and the coercive condition (H1) can be deduced by  (\ref{ga2}) which is equivalent to the so-called $T/2$-antiperiodic constraint
\begin{equation*}\label{antiperiodic}
q(t)=-q(t+\pi), \forall t \in\mathbb{R}.
\end{equation*}
Then we can see that
$$\mathcal{H}_{\mathcal{G}}=\{q(t)\in\mathcal{H} \mid q(t)=-q(t+\pi),q_i(t)=q_1(t+(i-1)\frac{2\pi}{n}), ~i=1,2,\ldots,n\}
\subset\Lambda.$$
And it is not difficult to see the uniform circular motion %with all bodies at the vertices of a regular $n-$gon centered at {origin} in $yoz$-plane is in $\Lambda_{\mathcal{G}}$, 
described in Theorem \ref{maintheorem} $\tilde{q}(t)\in\mathcal{H}_{\mathcal{G}}$.
By Theorem \ref{maintheorem},  $\tilde{q}(t)$ must be the unique minimum of $\mathcal{A}$ on  $\mathcal{H}_{\mathcal{G}}$.
%Also, in order to get the coercivity, we assume that
%\begin{equation}\label{antiperiodic}
%q(t)=-q(t+\pi), \forall t \in\mathbb{R},
%\end{equation}
%Precisely, the assumption (H1) is essential and it can be deduced directly  by (\ref{antiperiodic})  which is called $T/2$-antiperiodic constrain.
On the other hand, (\ref{groupaction}) is the general way \cite{15} to define the group action ${G}$ for classical  equal masses  N-body problem that makes the functional invariant. 
In our problem, with two fixed centers, we should be careful.
We have $\mathcal{A}(g\cdot q(t))= \mathcal{A}(q(t)), \forall g \in {G}$,  because the two centers are symmetric and have the same mass.
Then Lemma \ref{palais} implies $\tilde{q}(t)$, the minimum of $\mathcal{A}$ on  $\mathcal{H}_{\mathcal{G}}$, is also a critical point of $\mathcal{A}$ on  the whole space $\mathcal{H}$, thus a solution of dynamical system (\ref{1.1}).
\end{proof}
In the end of this section, we reduce our functional $\mathcal{A}$ due to the  choreographic condition (H2),
%We notice that the choreographic condition $ q_i(t)=q_1(t+(i-1)\frac{2\pi}{n})$, $i=2,3,...,n$ implies
\begin{align*}
\mathcal{A}(q)&=\int_0^{2\pi}[\frac{m}{2}\sum_{i=1}^{n}|\dot{q}_{i}(t)|^2+V(q_{1},\ldots,q_{n})]dt  \\
&=\int_0^{2\pi}\frac{mn}{2}|\dot{q}_{1}(t)|^2dt+n\int_0^{2\pi}[\frac{mM}{|q_1(t)-C_1|^{\beta}}+\frac{mM}{|q_1(t)-C_2|^{\beta}}]dt\\
&\ \ \ \ +\frac{n}{2}\sum_{j=1}^{n-1}\int_0^{2\pi}\frac{m^{2}}{|q_1(t)-q_1(t+j\frac{2\pi}{n})|^{\alpha}}dt. 
\end{align*}
Dividing by $mn$, we see that seeking the critical points of $\mathcal{A}(q)$ on $\Lambda$ is equivalent to finding the critical points of $\widetilde{\mathcal{A}}(x)$ on $\widetilde{\Lambda}$,
where
\begin{align*}
\widetilde{\mathcal{A}}(x)&=\int_0^{2\pi}[\frac{1}{2}|\dot{x}(t)|^2dt+\frac{M}{|x(t)-C_1|^{\beta}}+\frac{M}{|x(t)-C_2|^{\beta}} 
+\frac{1}{2}\sum_{j=1}^{n-1}\frac{m}{|x(t)-x(t+j\frac{2\pi}{n})|^{\alpha}}]dt,
\end{align*}
\begin{align*}
\widetilde{\Lambda}&=\{x(t)\in H_{2\pi}^{1}({\mathbb{R}}, \mathbb{R}^{3})| \int_0^{2\pi}x(t)dt=0,%\\
%&~~~~~~~
x(t)\neq x(t+i\frac{2\pi}{n}),~\forall t\in \mathbb{R},
                     i=1,2,\ldots,n-1 \}.
\end{align*}
In the following, we work on 
$\widetilde{\mathcal{A}}|_{\widetilde{\Lambda}}$ and if $\widetilde{x}(t)$ is the minimizer of $\widetilde{\mathcal{A}}|_{\widetilde{\Lambda}}$, then
 $\tilde{q}(t)=(\tilde{x}(t),~ \tilde{x}(t+\frac{2}{n}\pi),\ldots,~\tilde{x}(t+\frac{n-1}{n}2\pi))$ is a minimum of $\mathcal{A}|_{\Lambda}$.
%Also,  there might exist some other group actions which can induce the same periodic solution.
%Before that, we shall see there does exist some group action that Theorem \ref{maintheorem} holds.

%\section{Preliminaries}
\section{Some Inequalities }\label{section2}
We recall the famous  Poincar\'{e}-Wirtinger inequality.
Let $x(t)\in H_{2\pi}^{1}(\mathbb{R},\mathbb{R}^{d})$ such that $\int_{0}^{2\pi}x(t)dt=0$, then
%Let $x(t)\in H_{2\pi}^{1}(\mathbb{R},\mathbb{R}^{d})$ such that $\int_{0}^{2\pi}x(t)dt=0$, we  recall Poincar\'{e}-Wirtinger inequality,
$$\int_{0}^{2\pi}|\dot{x}(t)|dt\geq \int_{0}^{2\pi}|x(t)|^{2}dt,$$
where the equality holds if and only if $x(t)=a \cos t+ b\sin t, a, b\in\mathbb{R}^{d}$.
The following lemma   is some kind of generalization. %of the famous Poincar\'{e}-Wirtinger inequality.
\begin{lemma}\label{mainlemma}
Let $x(t)\in H_{2\pi}^{1}(\mathbb{R},\mathbb{R}^{d})$ such that $\int_{0}^{2\pi}x(t)dt=0$, then
$$\int_{0}^{2\pi}|\dot{x}(t)|^{2}dt\geq \mu_{\theta}^{2}\int_{0}^{2\pi}|x(t)-x(t+\theta)|^{2}dt,$$
where $\theta\in(0,2\pi)$ and $\mu_{\theta}=(2\sin\frac{\theta}{2})^{-1}$;
 the equality holds if and only if $x(t)=a \cos t+ b\sin t, a, b\in\mathbb{R}^{d}$.
\end{lemma}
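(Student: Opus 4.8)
The plan is to use the Fourier expansion of $x(t)$ together with the orthogonality of the trigonometric system on $[0,2\pi]$, reducing both sides of the claimed inequality to sums over Fourier modes and then comparing coefficient-by-coefficient. First I would write $x(t)=\sum_{k\geq 1}(a_k\cos kt + b_k\sin kt)$ with $a_k,b_k\in\mathbb{R}^d$; the constraint $\int_0^{2\pi}x(t)\,dt=0$ is exactly the absence of the $k=0$ term. By Parseval, $\int_0^{2\pi}|\dot x(t)|^2\,dt=\pi\sum_{k\geq 1}k^2(|a_k|^2+|b_k|^2)$. For the right-hand side I would compute $x(t)-x(t+\theta)$ mode by mode: using the addition formulas, the $k$-th mode of $x(t)-x(t+\theta)$ is again a combination of $\cos kt$ and $\sin kt$ whose squared amplitude works out to $(|a_k|^2+|b_k|^2)\,|1-e^{ik\theta}|^2 = (|a_k|^2+|b_k|^2)\,(2\sin\tfrac{k\theta}{2})^2$. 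Hence $\int_0^{2\pi}|x(t)-x(t+\theta)|^2\,dt=\pi\sum_{k\geq 1}(|a_k|^2+|b_k|^2)\,4\sin^2\tfrac{k\theta}{2}$.

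With both sides diagonalized, the inequality $\int_0^{2\pi}|\dot x|^2\,dt\geq \mu_\theta^2\int_0^{2\pi}|x(t)-x(t+\theta)|^2\,dt$ becomes the term-by-term assertion
\[
k^2 \geq \frac{4\sin^2\tfrac{k\theta}{2}}{4\sin^2\tfrac{\theta}{2}}\qquad\text{for every integer }k\geq 1 .
\]
This is equivalent to $\bigl|\sin\tfrac{k\theta}{2}\bigr|\leq k\,\bigl|\sin\tfrac{\theta}{2}\bigr|$, which follows from the elementary fact that $|\sin k u|\leq k|\sin u|$ for all $u\in\mathbb{R}$ and all positive integers $k$ (proved by induction on $k$ using $|\sin((k+1)u)| \le |\sin ku||\cos u| + |\cos ku||\sin u| \le k|\sin u| + |\sin u|$). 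Summing the scaled inequalities over $k$ and multiplying by $\pi$ yields the claim.

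For the equality case, note that the scalar inequality $k^2\sin^2\tfrac{\theta}{2}\geq \sin^2\tfrac{k\theta}{2}$ is strict for every $k\geq 2$, because $\theta\in(0,2\pi)$ forces $\tfrac{\theta}{2}\in(0,\pi)$ so $\sin\tfrac{\theta}{2}\neq 0$, and $|\sin ku|<k|\sin u|$ whenever $k\geq 2$ and $u\notin\pi\mathbb{Z}$. Therefore equality in the integrated inequality can hold only if $a_k=b_k=0$ for all $k\geq 2$, i.e. $x(t)=a_1\cos t + b_1\sin t$; conversely for such $x$ the single surviving mode $k=1$ gives equality. This pins down the equality case exactly as stated.

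The only genuinely delicate point is the bookkeeping in the Fourier computation of $\int_0^{2\pi}|x(t)-x(t+\theta)|^2\,dt$: one must keep track of the vector coefficients $a_k,b_k\in\mathbb{R}^d$ and verify that the cross terms between distinct modes and between the $\cos$ and $\sin$ parts of a fixed mode all integrate to zero, leaving precisely the factor $4\sin^2\tfrac{k\theta}{2}$ multiplying $|a_k|^2+|b_k|^2$. This is routine but is where a sign or factor-of-two slip would most easily creep in; everything after the diagonalization is the elementary inequality $|\sin ku|\leq k|\sin u|$ and its strictness for $k\geq2$.
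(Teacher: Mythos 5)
Your proof is correct and follows essentially the same route as the paper: expand $x$ in Fourier series, use orthogonality to diagonalize both integrals, and compare mode by mode, with the equality case forced by strictness for $k\geq 2$. The only cosmetic difference is that the paper works with complex exponentials and bounds $|1-e^{Jk\theta}|/|1-e^{J\theta}|$ by the geometric-sum/triangle-inequality argument, whereas you use the real expansion and prove $|\sin ku|\leq k|\sin u|$ by induction; these are interchangeable.
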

\begin{proof}
Consider the Fourier representation of $x(t)$,  
$$x(t)=\sum_{k\in \mathbb{Z}}c_{k}e^{Jkt},$$ 
where $J=\sqrt{-1}$ and $c_{k}\in\mathbb{C}^{d}$.
Then  $x(t)\in H_{2\pi}^{1}(\mathbb{R},\mathbb{R}^{d})$ and $\int_{0}^{2\pi}x(t)dt=0$ imply that $c_{k}=\bar{c}_{-k}$ and $c_{0}=0$.
By the orthogonality of the basis, we have 
\begin{align*}
 &\ \ \ \mu_{\theta}^{2} \int_{0}^{2\pi}|x(t)-x(t+\theta)|^{2}dt 
=\mu_{\theta}^{2} \int_{0}^{2\pi}|\sum_{k\in\mathbb{Z}}c_{k}e^{Jkt}(1-e^{Jk\theta})|^{2}dt \\
&=\mu_{\theta}^{2} \cdot 2\pi\sum_{k\in\mathbb{Z}}|c_{k}|^{2}|1-e^{Jk\theta}|^{2} 
= 2\pi\sum_{k\in\mathbb{Z}}|c_{k}|^{2}\frac{|1-e^{Jk\theta}|^{2}}{|1-e^{J\theta}|^{2}}\\
&= 2\pi\sum_{k\in\mathbb{Z}}|c_{k}|^{2}\frac{|1-e^{J|k|\theta}|^{2}}{|1-e^{J\theta}|^{2}}
= 2\pi\sum_{k\in\mathbb{Z}}|c_{k}|^{2} |1+e^{J\theta}+e^{J2\theta}+\dots+e^{J(|k|-1)\theta}|^{2}\\
&\leq 2\pi\sum_{k\in\mathbb{Z}}|c_{k}|^{2}k^{2},
\end{align*}
 the equality holds if and only if $k\in\{ 1,-1\}.$ 
 We also notice that $\dot{x}(t)=\sum_{k\in \mathbb{Z}}c_{k}Jke^{Jkt}$, then
 $$\int_{0}^{2\pi}|\dot{x}(t)|^{2}dt=2\pi\sum_{k\in \mathbb{Z}}|c_{k}|^{2}k^{2},$$
 which finish the proof.
\end{proof}
%%%%
\begin{remark}
Poincar\'{e}-Wirtinger inequality can be seen as an average result of our lemma. Because our lemma implies
$$4 \sin^{2}\frac{\theta}{2}\int_{0}^{2\pi}|\dot{x}(t)|^{2}dt\geq \int_{0}^{2\pi}|x(t)-x(t+\theta)|^{2}dt, \forall \theta\in [0,2\pi],$$
and integrating by $\theta$ from $0$ to $2\pi$, we have
$$4\pi \int_{0}^{2\pi}|\dot{x}(t)|^{2}dt\geq \int_{0}^{2\pi}d\theta\int_{0}^{2\pi}|x(t)-x(t+\theta)|^{2}dt, \forall \theta\in [0,2\pi].$$
Exchanging the  order of integration and using the fact that $\int_{0}^{2\pi}x(t+\theta)d\theta=0$, we can get Poincar\'{e}-Wirtinger inequality.
\end{remark}
%%%%%%
Let $\theta_{j}=j\frac{2\pi}{n}$ and $\mu_{j}=\mu_{\theta_{j}}=(2\sin\frac{j\pi}{n})^{-1}$, we have the following corollary.
\begin{corollary}\label{cor3.2}
%\textbf{Lemma 3.2}
 For every $x(t)\in \tilde{\Lambda}$, we have
\begin{align*}
\int_0^{2\pi}|\dot{x}(t)|^2dt\geq \sum_{j=1}^{n-1} \nu_j \int_{0}^{2\pi}|x(t)-x(t+j\frac{2\pi}{n})|^{2}dt,
\end{align*}
where 
$\nu_j=\frac{\mu_{j}'\mu_{j}^{2}}{\sum_{j=1}^{n-1}\mu'_{j}}$ and $\mu'_{j}>0$,
 the equality holds if and only if  
$
x(t)=a\cos t+b\sin t,~a,b \in \mathbb{R}^3.
$
\end{corollary}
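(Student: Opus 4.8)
The plan is to deduce Corollary \ref{cor3.2} from Lemma \ref{mainlemma} by taking a suitable convex combination of the inequalities obtained for the angles $\theta_j = j\frac{2\pi}{n}$, $j=1,\dots,n-1$. First I would apply Lemma \ref{mainlemma} separately with $\theta = \theta_j$ for each $j$, which is legitimate since every $x \in \widetilde{\Lambda}$ satisfies $\int_0^{2\pi} x(t)\,dt = 0$; this gives
\begin{align*}
\int_0^{2\pi}|\dot{x}(t)|^2\,dt \geq \mu_j^2 \int_0^{2\pi}|x(t)-x(t+j\tfrac{2\pi}{n})|^2\,dt, \qquad j=1,\dots,n-1.
\end{align*}
Next I would fix arbitrary positive weights $\mu'_j > 0$, multiply the $j$-th inequality by $\mu'_j$, sum over $j$, and divide both sides by $\sum_{j=1}^{n-1}\mu'_j$. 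The left-hand side is unchanged (it is just $\int_0^{2\pi}|\dot x|^2\,dt$ times $\sum \mu'_j$ divided by $\sum \mu'_j$), and the right-hand side becomes $\sum_{j=1}^{n-1} \nu_j \int_0^{2\pi}|x(t)-x(t+j\frac{2\pi}{n})|^2\,dt$ with $\nu_j = \frac{\mu'_j \mu_j^2}{\sum_{j=1}^{n-1}\mu'_j}$, exactly as claimed.

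For the equality case, I would argue that equality in the convex combination forces equality in every one of the $n-1$ individual inequalities simultaneously, because all weights $\mu'_j$ are strictly positive and the left-hand side is common to all of them. By the equality clause of Lemma \ref{mainlemma}, equality in any single one of these (say $j=1$) already forces $x(t) = a\cos t + b\sin t$ with $a,b \in \mathbb{R}^3$; conversely, such an $x$ has only the $k = \pm 1$ Fourier modes, so it attains equality in Lemma \ref{mainlemma} for every $\theta$, hence in the summed inequality. This gives the stated characterization of the equality case.

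I do not expect any genuine obstacle here: the corollary is a formal averaging consequence of Lemma \ref{mainlemma}, entirely parallel to the Remark that recovers the Poincaré--Wirtinger inequality by integrating over $\theta$, except that here we take a finite weighted sum over the discrete set $\{\theta_j\}$ instead of a continuous average. The only point requiring a word of care is the equality discussion—specifically noting that positivity of the $\mu'_j$ is what lets us pass from equality in the sum to equality in each term—but this is routine. One might also remark that the freedom in choosing the $\mu'_j$ will later be exploited to optimize the estimates in Section \ref{section3}, which is presumably why the corollary is stated with general weights rather than, say, $\mu'_j \equiv 1$.
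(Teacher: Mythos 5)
Your proposal is correct and matches the paper's intended argument: the corollary is obtained exactly by applying Lemma \ref{mainlemma} at each $\theta_j=j\frac{2\pi}{n}$, taking the convex combination with weights $\mu'_j>0$, and noting that positivity of the weights makes equality in the sum equivalent to equality in each term, hence to $x(t)=a\cos t+b\sin t$. No gap to report.
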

If we take $\mu'_{j}=(\mu_{j})^{\alpha}$, this corollary is equivalent to the Corollary 2 in \cite{4} which took several pages but played a very important role in their proof.
Another inequality we need is Jensen's inequality,
$$f(\frac{1}{2\pi}\int_0^{2\pi} g(t)dt)\leq \frac{1}{2\pi}\int_0^{2\pi}f(g(t))dt,$$ 
where $f$ is convex. Applying
$f(z)=z^{-\frac{\alpha}{2}}$ and $g(t)=|x(t)-x(t+j\frac{2\pi}{n})|^2$, 
we have
\begin{align*}
[\frac{1}{2\pi}\int_0^{2\pi}|x(t)-x(t+j\frac{2\pi}{n})|^2dt]^{-\frac{\alpha}{2}}\leq \frac{1}{2\pi}\int_0^{2\pi}\frac{1}{|x(t)-x(t+j\frac{2\pi}{n})|^{\alpha}}dt,
\end{align*}
then
\begin{align}\label{2.1}
\int_0^{2\pi}\frac{1}{|x(t)-x(t+j\frac{2\pi}{n})|^{\alpha}}dt\geq (2\pi)^{1+\frac{\alpha}{2}}[\int_0^{2\pi}|x(t)-x(t+j\frac{2\pi}{n})|^2dt]^{-\frac{\alpha}{2}},
\end{align}
where the equality holds if and only if $|x(t)-x(t+j\frac{2\pi}{n})|^{2}\equiv const$.
Moreover, we notice that if the equalities hold  simultaneously for Poincar$\acute{e}$-Wirtinger inequality and Jensen's inequality, we have
 %
%Let $x(t)\in \widetilde{\Lambda}$ and 
%$
%x_i:\mathbb{R}\rightarrow \mathbb{R}^3,~~~x_i(t)=x(t+i\frac{2\pi}{n}),~i=0,1,2,\cdots,n-1,
%$
%then we can define $\xi_h^x$ as
%\begin{align*}
%\xi_h^x=\int_0^{2\pi}|x(t)-x(t+h\frac{2\pi}{n})|^2dt,~h=1,2,\ldots,n-1.
%\end{align*}
%%We use the the same symbols as in \cite{4}.
%Similarly,  $\overline{\xi}_h=\int_0^{2\pi}|\overline{x}(t)-\overline{x}(t+h\frac{2\pi}{n})|^2dt=8\pi\sin^2(\pi h/n)$, where
%$\overline{x}(t)$ is the circle of radius one centered at the origin. 
%We have the following results  obtained by Barutello and Terracini in \cite{4}.
%\begin{lemma}\label{lemma3.2}
%%\textbf{Lemma 3.2}
%(Corollary 2 in \cite{4}) For every $x(t)\in \widetilde{\Lambda}$, we have
%\begin{align*}
%\int_0^{2\pi}|\dot{x}(t)|^2dt\geq 2\pi \sum_{h=1}^{n-1}\overline{u}_h\xi_h^x,
%\end{align*}
%where 
%\begin{equation}\label{xih}
%\overline{u}_h=\frac{1}{c}\times\frac{1}{\overline{\xi}_h^{\frac{1}{2}+1}}
%=c^{-1}2^{-\frac{9}{2}}\pi^{-\frac{3}{2}}\sin^{-3}\frac{\pi h}{n},  ~c=\sum_{h=1}^{n-1}\frac{1}{\overline{\xi}_h^{\frac{1}{2}}}.
%\end{equation}
%%and  $c=\sum_{h=1}^{n-1}\frac{1}{\overline{\xi}_h^{\frac{1}{2}}},$
% The equality holds if and only if  the functions $x_i(t)$ associated with $x(t)$ have the form
%\begin{align*}
%x_i(t)=\vec{a}\cos (t+i\frac{2\pi}{n})+\vec{b}\sin(t+i\frac{2\pi}{n}),~\vec{a},\vec{b} \in \mathbb{R}^3, i=0,1,2,\cdots,n-1.
%\end{align*}
%\end{lemma}

\begin{lemma}\label{lemma3.3}
%\textbf{Lemma 3.3}
Suppose $ x(t)={a}\cos t+{b}\sin t$, $ {a},{b} \in \mathbb{R}^3$, if for some $\theta\in(0,2\pi)$, $|x(t)-x(t+\theta)|^2,\forall t\in \mathbb{R}$ is constant, we have $ {a}\cdot{b}=0$ and $|{a}|=|{b}|$, which means that $x(t)$ is a uniform circular motion in the plane spanned by $a$ and $b$.
\end{lemma}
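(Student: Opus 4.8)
The plan is to write $x(t)-x(t+\theta)$ explicitly via the angle–addition formulas, square it, and collect the result into a constant plus a combination of $\cos 2t$ and $\sin 2t$; the constancy hypothesis then kills the two harmonics, and those two vanishing conditions are precisely $|a|=|b|$ and $a\cdot b=0$. Concretely, I would first use $\cos t-\cos(t+\theta)=2\sin\frac{\theta}{2}\sin\!\bigl(t+\frac{\theta}{2}\bigr)$ and $\sin t-\sin(t+\theta)=-2\sin\frac{\theta}{2}\cos\!\bigl(t+\frac{\theta}{2}\bigr)$ to get
$$x(t)-x(t+\theta)=2\sin\tfrac{\theta}{2}\Bigl(a\sin\bigl(t+\tfrac{\theta}{2}\bigr)-b\cos\bigl(t+\tfrac{\theta}{2}\bigr)\Bigr).$$
Putting $s=t+\frac{\theta}{2}$ and squaring gives
$$|x(t)-x(t+\theta)|^{2}=4\sin^{2}\tfrac{\theta}{2}\bigl(|a|^{2}\sin^{2}s-2(a\cdot b)\sin s\cos s+|b|^{2}\cos^{2}s\bigr).$$
Since $\theta\in(0,2\pi)$ we have $\sin\frac{\theta}{2}\neq 0$, so the left–hand side is constant in $t$ if and only if the bracket is constant in $s$.

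Next I would linearize the bracket with $\sin^{2}s=\frac{1-\cos 2s}{2}$, $\cos^{2}s=\frac{1+\cos 2s}{2}$, $\sin s\cos s=\frac{\sin 2s}{2}$, which turns it into
$$\frac{|a|^{2}+|b|^{2}}{2}+\frac{|b|^{2}-|a|^{2}}{2}\cos 2s-(a\cdot b)\sin 2s.$$
Because $\cos 2s$ and $\sin 2s$ are linearly independent over $\mathbb{R}$ and neither is constant, a combination $A\cos 2s+B\sin 2s$ is constant in $s$ only when $A=B=0$. Hence the constancy of $|x(t)-x(t+\theta)|^{2}$ forces $|b|^{2}-|a|^{2}=0$ and $a\cdot b=0$, i.e. $|a|=|b|$ and $a\perp b$. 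If $a=b=0$ the conclusion is vacuous (the orbit is the origin); otherwise $a,b$ are an orthogonal pair of equal length spanning a $2$-plane, and $x(t)=a\cos t+b\sin t$ traces a circle of radius $|a|$ in that plane at unit angular speed — a uniform circular motion.

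The argument is entirely elementary, so there is no serious obstacle; the only points that need care are recording that $\sin\frac{\theta}{2}\neq 0$ on the open interval $(0,2\pi)$ so that dividing out the prefactor $4\sin^{2}\frac{\theta}{2}$ is legitimate, and flagging the degenerate case $a=b=0$ when speaking of "the plane spanned by $a$ and $b$".
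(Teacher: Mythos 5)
Your proof is correct and follows essentially the same route as the paper: both expand $|x(t)-x(t+\theta)|^{2}$ via the half-angle factorization into $4\sin^{2}\tfrac{\theta}{2}$ times a quadratic expression in $\sin(t+\tfrac{\theta}{2}),\cos(t+\tfrac{\theta}{2})$, and extract $|a|=|b|$, $a\cdot b=0$ from its constancy. The only (immaterial) difference is that the paper differentiates in $t$ and sets the result to zero, while you linearize into $\cos 2s$, $\sin 2s$ and invoke their linear independence; your explicit remarks that $\sin\tfrac{\theta}{2}\neq 0$ and that $a=b=0$ is a degenerate case are small but welcome additions.
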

%\begin{lemma}\label{lemma3.3}
%%\textbf{Lemma 3.3}
%(Proposition 3 in \cite{4}) For every $x(t)\in \widetilde{\Lambda}$ such that $|x(t)-x(t+j\frac{2\pi}{n})|^2$ are constant, $j=1,2,\cdots,n-1$ and
%$ x(t)=\vec{a}\cos t+\vec{b}\sin t$, $ \vec{a},\vec{b} \in \mathbb{R}^3$ then
%$ \vec{a}\cdot\vec{b}=0$ and $|\vec{a}|=|\vec{b}|$.
%\end{lemma}
\begin{proof}
This lemma is similar to Proposition 3 in \cite{4}. Since
\begin{align*}
&\ \ \ \ |x(t)-x(t+\theta)|^2=[x(t)-x(t+\theta)]\cdot [x(t)-x(t+\theta)]\\
&=|a|^{2}[\cos t-\cos (t+\theta)]^{2}+|b|^{2}[\sin t-\sin (t+\theta)]^{2}+2a\cdot b[\cos t-\cos (t+\theta)] [\sin t-\sin (t+\theta)]\\
&=4\sin^{2}\frac{\theta}{2}[|a|^{2}\sin^{2}(t+\frac{\theta}{2})+|b|^{2}\cos^{2}(t+\frac{\theta}{2})-\sin(2t+\theta) a\cdot b]
\end{align*}
is constant, differentiating by $t$, we have
$$ 4\sin^{2}\frac{\theta}{2} [(|a|^{2}-|b|^{2})\sin(2t+\theta)-2\cos (2t+\theta)a\cdot b]\equiv 0, \forall t\in \mathbb{R}.$$
That is to say $|a|^{2}-|b|^{2}=0$ and $a\cdot b=0$, which finish the proof.
\end{proof}
\section{The proof of Theorem \ref{maintheorem} }\label{section3}
In this section, we give the proof of Theorem  \ref{maintheorem} by   seeking the minimizer of $\widetilde{\mathcal{A}}(x)=\widetilde{\mathcal{A}}_1(x)+\widetilde{\mathcal{A}}_2(x)$ on $\widetilde{\Lambda}$,
where
$$\widetilde{\mathcal{A}}_1(x)=\int_0^{2\pi}[\frac{1+\lambda}{4}|\dot{x}(t)|^2+\frac{M}{|x(t)-C_1|^{\beta}}+\frac{M}{|x(t)-C_2|^{\beta}}]dt,$$
\begin{equation*}
\widetilde{\mathcal{A}}_2(x)=\int_0^{2\pi}[\frac{(1-\lambda)}{4}|\dot{x}(t)|^2dt+\frac{1}{2}\sum_{j=1}^{n-1}\frac{m}{|x(t)-x(t+j\frac{2\pi}{n})|^{\alpha}}]dt,
\end{equation*}
with the parameter $ \lambda=\lambda(m)\in[-1,1]$ to be determinate later.
The idea is, if $\widetilde{\mathcal{A}}_1(x)$ and $\widetilde{\mathcal{A}}_2(x)$ attain their absolute minimum on the same motion, this motion  will also be the minimum of $\widetilde{\mathcal{A}}(x)$ on $\widetilde{\Lambda}$.
 Inspired by the work of Long-Zhang \cite{21}, applying Poincar$\acute{e}$-Wirtinger inequality and Jensen's inequality, we have
\begin{align}
\widetilde{\mathcal{A}}_1(x)&=\int_0^{2\pi}[\frac{1+\lambda}{4}|\dot{x}(t)|^2+\frac{M}{|x(t)-C_1|^{\beta}}+\frac{M}{|x(t)-C_2|\beta}]dt,\nonumber\\
             \label{3.1}               &\geq\int_0^{2\pi}\frac{1+\lambda}{4}|x(t)|^2dt+\int_0^{2\pi}\frac{M}{|x(t)-C_1|^{\beta}}+\frac{M}{|x(t)-C_2|^{\beta}}dt\\
              \label{3.2}                &\geq\int_0^{2\pi}\frac{1+\lambda}{4}|x(t)|^2dt+
              M(2\pi)^{\frac{\beta}{2}+1}[\int_0^{2\pi}|x(t)-C_{1}|^{2}dt]^{-\frac{\beta}{2}}\\
              &\  \  \  \  \  \  +M(2\pi)^{\frac{\beta}{2}+1}[\int_0^{2\pi}|x(t)-C_{2}|^{2}dt]^{-\frac{\beta}{2}} \nonumber \\
           %     \label{4.5}              &=\frac{\lambda}{4}\int_0^{2\pi}(|x(t)|^2+1)dt+2\int_0^{2\pi}(|x|^2+1)^{-\frac{1}{2}}dt-\frac{\lambda\pi}{2}\\
                      &= \frac{1+\lambda}{4}\int_0^{2\pi}(|x(t)|^2+1)dt+2M(2\pi)^{\frac{\beta}{2}+1}[\int_0^{2\pi}(|x(t)|^2+1)dt]^{-\frac{\beta}{2}}-\frac{1+\lambda}{2}\pi \nonumber \\
                            &=\Psi(s)
                            \geq \min_{s\geq \sqrt{2\pi}}\Psi(s)\nonumber,
\end{align}
where
$\Psi(s)=\frac{1+\lambda}{4}s^2+2M(2\pi)^{\frac{\beta}{2}+1}s^{-\beta}-\frac{1+\lambda}{2}\pi$ and 
      $s=[\int_0^{2\pi}(|x(t)|^2+1)dt]^{\frac{1}{2}}\geq \sqrt{2\pi}.$
%Note that $s=\sqrt{2\pi}$ if and only if $x(t)=(0,0,0)$.\\
Since $\Psi''(s)>0$ for $s>0$,  $\Psi(s)$ possesses a unique minimum at $s_0=\sqrt{2\pi}(\frac{4\beta M}{1+\lambda})^{\frac{1}{\beta+2}}$; i.e.,
$$\widetilde{\mathcal{A}}_1(x)\geq\Psi(s_{0}),\ \ \forall x(t)\in \widetilde{\Lambda}.$$
Let $\widetilde{x}_1(t)$ be the minimizer of $\widetilde{\mathcal{A}}_1(x)$ on $\widetilde{\Lambda}$,  we will see that 
$\widetilde{\mathcal{A}}_1(\widetilde{x}_1(t))=\Psi(s_{0})$ if and only if $\widetilde{x}_1(t)$ is a special circular motion
where $\widetilde{x}_1(t)$ makes all the inequalities above in the infimum estimate become equalities.
To be precise, the Poincar$\acute{e}$-Wirtinger Inequality applied in (\ref{3.1}) yields
$\widetilde{x}_1(t)={a}_1\cos (t)+{b}_1\sin(t),$ ${a}_1,{b}_1 \in \mathbb{R}^3$.
Jensen's inequality applied in (\ref{3.2})  implies that $|x(t)-C_{j}|\equiv const, j=1,2$.  
Then $|x(t)-C_{1}|^{2}+|x(t)-C_{1}|^{2}=2|x(t)|^{2}+2\equiv const$ and
$|x(t)-C_{1}|^{2}-|x(t)-C_{1}|^{2}\equiv const$. 
The latter one implies that $x(t)$ is in a plane parallel  to $yoz$-plane, but the condition $\int_{0}^{2\pi}x(t)dt=0$ says it must in $yoz$-plane.
So $\widetilde{x}_1(t)$ is a circular periodic motion in $yoz$-plane with the center at origin,
and $s_0=\sqrt{2\pi}(\frac{4\beta M}{1+\lambda})^{\frac{1}{\beta+2}}=[\int_0^{2\pi}(|\widetilde{x}_1(t)|^2+1)dt]^{\frac{1}{2}}$ implies that the radius of the circle 
$$R_1(\lambda)=|\widetilde{x}_1(t)|=\sqrt{(\frac{4\beta M}{1+\lambda})^{\frac{2}{\beta+2}}-1},~~~~
\lambda\in(-1,4\beta M-1)\cap [-1,1].$$
 
Now we turn to the functional $\widetilde{\mathcal{A}}_2(x)$ which is more difficult due to the complexity of its potential part.
So we use Jensen's inequality and our Lemma \ref{mainlemma} instead of Poincar\'{e}-Wirtinger Inequality to give the estimates.
By Corollary \ref{cor3.2} and (\ref{2.1}), we have
\begin{align}
   \widetilde{\mathcal{A}}_2(x)
   &=\int_0^{2\pi}[\frac{(1-\lambda)}{4}|\dot{x}(t)|^2dt+\frac{1}{2}\sum_{j=1}^{n-1}\frac{m}{|x(t)-x(t+j\frac{2\pi}{n})|^{\frac{\alpha}{2}}}]dt\nonumber\\
\label{3.3}                              &\geq {\frac{1-\lambda}{4}}\sum_{j=1}^{n-1}\nu_{j} \int_{0}^{2\pi}|x(t)-x(t+j\frac{2\pi}{n})|^{2}dt+\frac{1}{2}\sum_{j=1}^{n-1}\int_{0}^{2\pi}\frac{m}{|x(t)-x(t+j\frac{2\pi}{n})|^{\frac{\alpha}{2}}}dt\\
 \label{3.4}                             &\geq  {\frac{1-\lambda}{4}}\sum_{j=1}^{n-1}\nu_{j} \int_{0}^{2\pi}|x(t)-x(t+j\frac{2\pi}{n})|^{2}dt+\frac{1}{2}(2\pi)^{1+\frac{\alpha}{2}}m\sum_{j=1}^{n-1}[\int_0^{2\pi}|x(t)-x(t+j\frac{2\pi}{n})|^2dt]^{-\frac{\alpha}{2}}\\
%  \label{4.12}                            &=\sum_{h=1}^{n-1}\{\frac{(1-\lambda)\pi}{2}\overline{u}_h\xi_h^x+(2\pi)^{\frac{3}{2}}m[\int_0^{2\pi}|x(t)-x(t+h\frac{2\pi}{n})|^2dt]^{-\frac{1}{2}}\}\\
                            &=\sum_{j=1}^{n-1}\Phi_j(\xi_{j}^{x})
                            \geq \sum_{j=1}^{n-1} \min_{s_{j}>0}\Phi_j(s_{j}) \nonumber,
\end{align}
where
\begin{align*}
\Phi_j(s_{j})&= {\frac{1-\lambda}{4}}\nu_{j} s_{j}+\frac{1}{2}(2\pi)^{1+\frac{\alpha}{2}}ms_{j}^{-\frac{\alpha}{2}}, j=1,2,\cdots,n-1,\\
        \xi_{j}^{x}&=\int_0^{2\pi}|x(t)-x(t+j\frac{2\pi}{n})|^2dt.
\end{align*}
Here $\nu_{j}=\frac{\mu_{j}^{2+\alpha}}{\sum_{j=1}^{n-1}\mu^{\alpha}_{j}}$ is selected carefully such that the following equation (\ref{independentj})  can be independent with $j$.
Let $\widetilde{x}_2(t)$ be the minimizer of $\widetilde{\mathcal{A}}_2(x)$ on $\widetilde{\Lambda}$, we claim that $\widetilde{x}_2(t)$ makes all the inequalities above in the infimum estimate become equalities. Because Corollary  \ref{cor3.2} applied in (\ref{3.3}) yields
$\widetilde{x}_2(t)={a}_2\cos (t)+ {b}_2\sin(t),$ ${a}_2,{b}_2 \in \mathbb{R}^3$.
The Jensen's inequality applied in (\ref{3.4})  implies $|\widetilde{x}_2(t)-\widetilde{x}_2(t+j\frac{2\pi}{n})|\equiv const$. By Lemma \ref{lemma3.3} we conclude that $\widetilde{x}_2(t)$ a circular motion  centered at the origin.
Moreover, we see that $\Phi_j(s)$ attains  its  minimum at
$$\bar{s}_{j}=2\pi[\frac{\alpha m}{\nu_{j}(1-\lambda)}]^{\frac{2}{\alpha+2}}=
2\pi(\frac{4\alpha m}{1-\lambda})^{\frac{2}{\alpha+2}}\sin^{2}\frac{j}{n}\pi(\sum_{j=1}^{n-1}\sin^{-\alpha}\frac{j}{n}\pi)^{\frac{2}{\alpha+2}}.$$
Now we pick the radius of the circular motion $R_2(\lambda)=|\widetilde{x}_2(t)|$ appropriately, such that for every $j=1,2,\dots,n-1$,
 $ \xi_{j}^{x}=\bar{s}_{j}$, i.e.,
$$
8\pi R_{2}^{2}(\lambda)\sin^{2}\frac{j}{n}\pi
=2\pi(\frac{4\alpha m}{1-\lambda})^{\frac{2}{\alpha+2}}\sin^{2}\frac{j}{n}\pi(\sum_{j=1}^{n-1}\sin^{-\alpha}\frac{j}{n}\pi)^{\frac{2}{\alpha+2}}.
$$
%By (\ref{4.15}) and (\ref{4.16}), we have
Then we get
\begin{equation}\label{independentj}
R_2(\lambda)=2^{\frac{-\alpha}{\alpha+2}}(\frac{\alpha m}{1-\lambda}\sum_{j=1}^{n-1}\sin^{-\alpha}\frac{j}{n} \pi)^{\frac{1}{\alpha+2}},
\end{equation}
%\begin{align*}
%R_2(\lambda)= \frac{2^{\frac{1}{2}}\pi^{\frac{1}{6}}c^{\frac{1}{3}}m^{\frac{1}{3}}}{(1-\lambda)^{\frac{1}{3}}},
%\end{align*}
which is independent with $j$. That is to say the circular motion centered at the origin  with the radius $R_{2}(\lambda)$ is  the absolutely minimum of $\widetilde{\mathcal{A}}_{2}|_{\tilde{\Lambda}}$.

Since $\widetilde{\mathcal{A}}_2(\widetilde{x}_2(t))$ is invariant for every orthogonal transformation, we can choose $\widetilde{x}_2(t)$  such that it is a circular motion in $yoz$-plane.
Then it is enough to set the parameter $\lambda$ appropriately such that  $|\widetilde{x}_1(t)|$ is equal to $|\widetilde{x}_2(t)|$, i.e. $R_{2}=R_{1}$.  
Consider 
\begin{equation}\label{r2-r1}
F(\lambda,m)=R_{2}-R_{1}=2^{\frac{-\alpha}{\alpha+2}}(\frac{\alpha m}{1-\lambda}\sum_{j=1}^{n-1}\sin^{-\alpha}\frac{j}{n} \pi)^{\frac{1}{\alpha+2}}-
\sqrt{(\frac{4\beta M}{1+\lambda})^{\frac{2}{\beta+2}}-1},\end{equation}
%\begin{equation}\label{r2-r1}
%F(\lambda,m)=R_{2}-R_{1}=\frac{2^{\frac{1}{2}}\pi^{\frac{1}{6}}c^{\frac{1}{3}}m^{\frac{1}{3}}}{(1-\lambda)^{\frac{1}{3}}}-\sqrt{\frac{2^{\frac{4}{3}}}{\lambda^{\frac{2}{3}}}-1},\end{equation}
it is obvious that, for given $m,M$, $F$ is strictly monotone increasing about $\lambda$,  $ \lim_{\lambda\rightarrow -1^{+}}F=-\infty$ and 
$$ \left\{ \begin{matrix} 
 \lim_{\lambda\rightarrow 1^{-}}F=+\infty,   & if~ 4\beta m \geq 2\\
F(4\beta m-1,m)>0 & if~ 4\beta m<2.
\end{matrix} \right.
$$
So there exists only one $\lambda=\tilde{\lambda}(m)\in (-1,1)$ such that $R_1=R_2$, 
which implies $|\widetilde{x}_1(t)|=|\widetilde{x}_2(t)|$.
Now we investigate the dependence of $\tilde{\lambda}$ on the parameter $m$, differentiating (\ref{r2-r1}) by $m$, we have
$$F_{m}(\tilde{\lambda}(m))+F_{\lambda}(\tilde{\lambda}(m))\frac{d \tilde{\lambda}(m)}{dm}=0.$$
Since $F_{\lambda},F_{m}>0$, we have $ \frac{d \tilde{\lambda}(m)}{dm}<0$. Then $\frac{d R_{1}}{d\lambda}<0$ implies that the radius of the circle increases as $m$ increases.
This completes the proof of Theorem \ref{maintheorem}.
\begin{remark}
For the limiting case $M=0$, there is no center force and it is just the n-body problem with simple choreography constraint. We set 
$\lambda=-1$,  the proof also works and we can get  the uniform circular motions with radius $R_{2}=2^{-\frac{\alpha+1}{\alpha+2}}({\alpha m}\sum_{j=1}^{n-1}\sin^{-\alpha}\frac{j}{n} \pi)^{\frac{1}{\alpha+2}}$ which coincides the result in the work of  Barutello-Terracini \cite{4}.
\end{remark}

\section{Acknowledgment} The first author was  supported by The Youth Fund of Mianyang Normal University and  Project of Sichuan Education Department, and the second author was supported by  the National Natural Science Foundation of China (11601045) and China Scholarship Council. 
%Part of this work was done when the second author was visiting University of Minnesota, he thanks  the hospitality and support of Professor Richard Moeckel.
Part of this work was done when the second author was visiting University of Minnesota.  He thanks the School of Mathematics and Professor Richard Moeckel for their hospitality and support.
%Part of this work was done when the second author was a visitor at University of Minnesota under the grant of China Scholarship Council.%(201906055026). 
%and he thanks  the hospitality and support of Richard Moeckel.

\end{document}